\newtheorem{theorem}{Theorem}
\newtheorem{lemma}[theorem]{Lemma}
\newtheorem{proposition}[theorem]{Proposition}
\newtheorem{conjecture}[theorem]{Conjecture}
\title{The image of multilinear polynomials evaluated on $3\times 3$ upper triangular matrices\thanks{Supported by grant \#2018/15627-2, S\~ao Paulo Research Foundation (FAPESP).}}
\author{Thiago Castilho de Mello\thanks{Universidade Federal de S\~ao Paulo -- Instituto de Ci\^encia e Tecnologia, tcmello@unifesp.br}
}
\date{}
\begin{document}
\maketitle


%

\begin{abstract}
	We describe the images of multilinear polynomials of arbitrary degree evaluated on the $3\times 3$ upper triangular matrix algebra over an infinite field.
	
	\noindent
	{\bf Key words:} multilinear polynomials, upper triangular matrices, Lvov-Kaplansky's conjecture.
	
	\noindent {\bf Mathematics subject classification} (2010): 16S50, 16R10, 15A54.
\end{abstract}

\section{Introduction}

A famous open problem known as Lvov-Kaplansky's conjecture asserts: the image of a multilinear polynomial in noncommutative variables on the matrix algebra $M_{n}(K)$ over a field $K$ is a vector space \cite{Dniester}.

A major breakthrough in this direction was made by Kanel-Belov, Malev and Rowen \cite{Kanel2, Malev}, with the solution of the conjecture for $n=2$. Also for $3\times 3$ matrices the authors obtained significant results \cite{Kanel3}, but the complete problem for matrices of order $\geq 3$ is still open.
For the state-of-art of the Lvov-Kaplansky conjecture see \cite{Survey}.

This conjecture motivated many different studies related to images of polynomials. For instance, papers on images of Lie, and Jordan polynomials, and also for other algebras have been published since then. The particular case of $n\times n$ upper triangular matrices over a field $K$, $UT_n(K)$ (or simply $UT_n$), was studied by the author and Fagundes in \cite{FagundesdeMello}, where the Lvov-Kaplansky's conjecture for $UT_n(K)$ was proved to be equivalent to the following

\begin{conjecture}
	The image of a multilinear polynomial on $UT_n(K)$ is either $\{0\}$, $UT_n$ or $J^{k}$ for some integer $k\geq 1$, where $J=J(UT_n)$ is the Jacobson radical of $UT_n$, i.e., the set of strictly upper triangular matrices.
\end{conjecture}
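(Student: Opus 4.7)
The plan is to reduce the conjecture to a filtration analysis. Writing $J=J(UT_n)$ and using the chain $UT_n\supset J\supset J^2\supset\cdots\supset J^{n-1}\supset J^n=0$, I would first examine $f(x_1,\ldots,x_m)=\sum_{\sigma\in S_m}\alpha_\sigma x_{\sigma(1)}\cdots x_{\sigma(m)}$ modulo $J$. Since $UT_n/J$ is commutative, substituting $x_i=d_i+n_i$ with $d_i$ diagonal and $n_i$ strictly upper triangular yields $f\equiv\bigl(\sum_\sigma\alpha_\sigma\bigr)d_1\cdots d_m\pmod{J}$. The first dichotomy follows: if $\sum_\sigma\alpha_\sigma\neq 0$, then every diagonal matrix occurs in $\mathrm{Im}\,f$, and combining this with conjugation by elementary unipotents should produce every off-diagonal entry, giving $\mathrm{Im}\,f=UT_n$. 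Otherwise $\mathrm{Im}\,f\subseteq J$, and I would iterate the argument down the filtration to locate the maximal $k$ with $\mathrm{Im}\,f\subseteq J^k$.

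The key invariance properties of $\mathrm{Im}\,f$ are: (i) closure under the diagonal torus action $x_i\mapsto d x_i d^{-1}$, which decomposes the image along the weight spaces $Ke_{ij}$; (ii) closure under the dilation $x_1\mapsto\lambda x_1$; and (iii) for fixed $x_2,\ldots,x_m$ the partial map $x_1\mapsto f(x_1,\ldots,x_m)$ is $K$-linear, so $\mathrm{Im}\,f$ is a union of linear subspaces. Using (i)--(iii) together with a combinatorial analysis of which monomials in $f$ survive in each weight space, I would show that the projection of $\mathrm{Im}\,f$ onto each $Ke_{ij}$ with $j-i\geq k$ is either $\{0\}$ or all of $Ke_{ij}$, and by the maximality of $k$ the ``all of'' case must occur for some entries of weight exactly $k$. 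A separate argument, using conjugation by the full group $UT_n^\times$ of invertible upper triangular matrices, should then propagate this across all weight spaces of $J^k$.

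The genuine obstacle, and the reason the conjecture remains open for $n\geq 4$, is to upgrade this weight-by-weight surjectivity to the global statement $\mathrm{Im}\,f=J^k$: one must produce a \emph{single} substitution realizing an arbitrary $A=\sum a_{ij}e_{ij}\in J^k$, not merely one weight component at a time. For $n=3$ the space $J^2=Ke_{13}$ is one-dimensional and $J/J^2$ has only two generators $e_{12},e_{23}$, so the interactions between weight components are governed by a manageable combinatorics on the coefficients $\alpha_\sigma$; this is where the paper's main results lie. For general $n$ one would need either a uniform mechanism to combine weight-homogeneous evaluations into arbitrary sums inside $J^k$, or a structural insight beyond the torus-action and linearity invariances used above. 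Supplying such a mechanism is the step I expect to be the hardest.
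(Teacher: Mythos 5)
You are addressing the paper's \emph{Conjecture}, which is open for $n\geq 4$; the paper itself only proves the cases $n=2$ and $n=3$. Your proposal is accordingly a programme rather than a proof, and you say so yourself: the step of upgrading weight-by-weight surjectivity to $\mathrm{Im}\,f=J^k$ is precisely what is missing. So there is a genuine gap, and it is the central one; nothing in (i)--(iii) supplies it. Moreover, your invariance (i) does not ``decompose the image along the weight spaces'': $\mathrm{Im}\,f$ is only known to be a cone closed under simultaneous conjugation, not a linear subspace (that linearity is the content of the conjecture), so torus-invariance makes it a union of orbits, and the claim that the projection onto each $Ke_{ij}$ is either $\{0\}$ or all of $Ke_{ij}$ does not follow from the stated invariances. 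Your first dichotomy is correct but obtained more cheaply in the paper (Lemma \ref{8}): if $\sum_\sigma\alpha_\sigma\neq 0$ then $f(a,I,\dots,I)=\bigl(\sum_\sigma\alpha_\sigma\bigr)a$ already gives $\mathrm{Im}\,f=UT_n$ with no conjugation by unipotents, while $\sum_\sigma\alpha_\sigma=0$ is equivalent to $f\in\langle[x_1,x_2]\rangle^T$ and hence $\mathrm{Im}\,f\subseteq J$.

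For comparison, the paper's working method (carried out for $n=3$) does not use weight spaces at all. It reduces $f$ modulo the identities of $UT_n$ using Drensky's basis of the relatively free algebra (Theorem \ref{Dr}), so that $f$ becomes a linear combination of products of at most $n-1$ left-normed commutators; it then stratifies by the least number of commutator factors genuinely present, i.e., by the largest $k$ with $f\in\langle[x_1,x_2]\cdots[x_{2k-1},x_{2k}]\rangle^T$, and in the key step evaluates all but one variable at the identity matrix or at a fixed diagonal matrix $D$ with distinct entries, using $[J^k,D]=J^k$ and $[UT_n,D]=J$ (Lemma \ref{10}) to realize all of $J^k$ from a single surviving basis polynomial. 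That concrete normal form plus the diagonal substitution is the structural mechanism your plan lacks; identifying its analogue for general $n$ (where one must control linear combinations of products of up to $n-1$ commutators simultaneously) is exactly the open difficulty you correctly flag at the end.
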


We remark that in the papers $\cite{Fagundes, FagundesdeMello}$ $J^k$ was denoted by $UT_n^{(k-1)}$. This is the set of matrices whose entries $(i,j)$ are zero if $i\geq j+k$, for $k\geq 1$.

A general result in the direction was given in $\cite{Fagundes}$, where the author proves the conjecture for polynomials of arbitrary degree evaluated on strictly upper triangular matrices.

In \cite{FagundesdeMello}, the authors prove the conjecture for polynomials of degree up to 4 over fields of zero characteristic. 

For polynomials of arbitrary degree, the problem was solved in the case $n=2$ by Fagundes in his master's degree dissertation \cite{FagundesDissertation}, but the text was written in Portuguese and the result was not published elsewhere. Some time latter, in \cite{Wang} (see also \cite{WangCorr} for a correction of the paper), Wang gives another proof of this result. 

In this short note, we present Fagundes' proof of the above result, since it is extremely simpler then Wang's proof, and we prove the above conjecture for $n=3$.

The techniques here used are based on the paper \cite{FagundesdeMello} and on the theory of algebras with polynomial identities (PI-algebras). We suppose the reader has the basic knowledge of this subject. For more information and notation, see the book \cite{drenskybook}. We believe the technique used in this paper may be adapted to solve the general problem for $UT_n$. This will be considered in a future project.

\section{Preliminary Results}

In this section we recall some results on the theory of PI-algebras and results on images of multilinear polynomials given in \cite{FagundesdeMello}. We denote by $K\langle X \rangle$ the free associative algebra freely generated by the set $X$, i.e., the algebra of noncommutative polynomials in the variables of $X$, and by $P_n$ the set of multilinear polynomials of degree $n$ in $K\langle X \rangle$. If $S\subseteq K\langle X \rangle$, we denote by $\langle S \rangle ^T$, the $T$-ideal generated by $S$.

\begin{theorem}[Theorem 5.2.1 of \cite{drenskybook}]\label{Dr}
	Let $K$ be an infinite field and let $UT_n(K)$ be the algebra of $n\times n$ upper triangular matrices over $K$. 
	
	\begin{enumerate}
		\item The polynomial identity
		\[ [x_1, x_2]\cdots [x_{2n-1}, x_{2n}] = 0\]
		forms a basis of the polynomial identities of $UT_n(K)$.		
		\item The relatively free algebra $F(UT_n(K))$ has a basis consisting of all products
		$$
		x_1^{a_1}\ldots x_m^{a_m}[x_{i_{11}},x_{i_{21}},\ldots,x_{i_{p_1}1}]\ldots[x_{i_{1r}},x_{i_{2r}},\ldots,x_{i_{p_rr}}],
		$$
		where the number $r$ of participating commutators is $\le n-1$ and the indexes in each commutator $[x_{i_{1s}},x_{i_{2s}},\ldots,x_{i_{p_ss}}]$ satisfy $i_{1s}>i_{2s}\le\cdots\le i_{p_ss}$.
		
	\end{enumerate}
\end{theorem}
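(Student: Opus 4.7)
The plan is to handle the two parts in the natural order, using the second part to obtain the first. First, one easily verifies that $[x_1,x_2]\cdots[x_{2n-1},x_{2n}]$ is an identity of $UT_n(K)$: every commutator of upper triangular matrices lies in the Jacobson radical $J$ of strictly upper triangular matrices, and $J^n=0$, so any product of $n$ such commutators vanishes. The harder direction, that every identity of $UT_n(K)$ is a consequence of this one, will follow once we exhibit a basis for the relatively free algebra $F(UT_n(K))$ matching the basis of $K\langle X\rangle/\langle[x_1,x_2]\cdots[x_{2n-1},x_{2n}]\rangle^T$.

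For the basis description in part 2, I would work inside $K\langle X\rangle$ modulo the $T$-ideal generated by the product of $n$ commutators and rewrite an arbitrary monomial into the claimed normal form using three standard tools: (a) the Leibniz rule $[a,bc]=[a,b]c+b[a,c]$, which lets one slide plain variables out of commutators; (b) the rewriting $[a,b,c]-[a,c,b]=[a,[b,c]]$ of the Jacobi identity, which allows sorting the tail of a left-normed commutator into the required pattern $i_{2s}\le\cdots\le i_{p_ss}$ at the cost of creating nested commutators that can be re-expanded into left-normed ones of the same total length; (c) the congruence $[a,b]\cdot z\equiv z\cdot[a,b]$ modulo a product of two commutators, which lets the monomial prefix $x_1^{a_1}\cdots x_m^{a_m}$ be collected at the front and the commutator factors pushed to the right. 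A well-founded ordering on monomials (lexicographic on number of commutators, then on lengths, then on indices) forces termination, and the governing identity enforces $r\le n-1$.

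To establish linear independence of these normal-form monomials in $F(UT_n(K))$, which simultaneously closes part 1, I would pass to generic matrices: assign $x_i\mapsto D_i+N_i$ where $D_i$ has independent commuting diagonal entries $\xi_{i,1},\ldots,\xi_{i,n}$ and $N_i$ has independent strictly upper triangular entries $\nu_{i,jk}$. Each commutator maps into the strictly upper triangular part, a product of $r$ commutators lands in $J^r$, and the diagonal prefixes contribute distinct monomials in the $\xi$-parameters. Comparing coefficients in the resulting polynomial ring over $K$ in the $\xi$'s and $\nu$'s shows that the proposed normal-form monomials are $K$-linearly independent. Spanning (from part 2) together with this linear independence yields both the basis statement and, by comparison with the quotient $K\langle X\rangle/\langle[x_1,x_2]\cdots[x_{2n-1},x_{2n}]\rangle^T$, the identity basis in part 1.

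The main obstacle I anticipate is the combinatorial bookkeeping in step (b): verifying that repeated application of the Jacobi-type rewriting strictly decreases the chosen monomial ordering and terminates in the specific pattern $i_{1s}>i_{2s}\le i_{3s}\le\cdots$ rather than cycling or producing unwanted permutations at the head of a commutator. The matrix evaluation step, while conceptually standard, also requires a careful choice of test matrices so that normal-form monomials have genuinely different shapes at each layer $J^r/J^{r+1}$; this is where the bound $r\le n-1$ becomes tight and drives the whole theorem.
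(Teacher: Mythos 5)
This theorem is not proved in the paper at all: it is quoted verbatim from Drensky's book (Theorem 5.2.1 of \cite{drenskybook}) and used as a black box, so there is no ``paper's own proof'' to compare against. Your outline does, however, follow the standard proof given in that reference: verify that a product of $n$ commutators vanishes on $UT_n(K)$ since commutators land in $J$ and $J^n=0$; show the listed products span $K\langle X\rangle/\langle[x_1,x_2]\cdots[x_{2n-1},x_{2n}]\rangle^T$ by a rewriting process; prove their images in $F(UT_n(K))$ are linearly independent by evaluating on generic upper triangular matrices (this is where the infinite-field hypothesis enters, via a Vandermonde-type separation of the exponents $a_1,\dots,a_m$); and conclude both parts by comparing the surjection $K\langle X\rangle/\langle[x_1,x_2]\cdots[x_{2n-1},x_{2n}]\rangle^T\to F(UT_n(K))$, which sends a spanning set to an independent set and is therefore an isomorphism.

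One concrete error in your sketch: tool (c) is misstated. The difference $[a,b]z-z[a,b]=[[a,b],z]=[a,b,z]$ is a single left-normed commutator of length $3$, not a product of two commutators, and it does not lie in $\langle[x_1,x_2][x_3,x_4]\rangle^T$ (it is not even an identity of $UT_2$, e.g.\ $[e_{11},e_{12},e_{11}]=-e_{12}$). If your rewriting procedure discarded these correction terms as negligible modulo two commutators, the spanning argument would fail already for $n=2$. The fix is immediate --- the correction term is itself a normal-form candidate (a longer left-normed commutator, to be further sorted via the Jacobi rewriting in (b)) and must be carried along, which is exactly what the termination argument on your well-founded ordering has to accommodate --- but as literally written the bookkeeping is wrong.
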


%
%
%
%

The following is an improvement of \cite[Proposition 8]{FagundesdeMello} and is easy to prove.

\begin{lemma}\label{8}
	Let $f(x_1,\dots,x_n)=\sum_{\sigma\in S_n} \alpha_{\sigma} x_{\sigma(1)} \cdots x_{\sigma(n)}$. Then 
	
	\begin{enumerate}
		\item If $\sum_{\sigma \in S_n}a_{\sigma}\neq 0$, then the image of $f$ on a unitary algebra $A$ is $A$.
		
		\item $f\in \langle [x_1,x_2]\rangle ^T$ if and only if $\sum_{\sigma \in S_n} a_{\sigma}=0$.
	\end{enumerate}
\end{lemma}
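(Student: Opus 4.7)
\medskip
\noindent\textbf{Proof plan for Lemma \ref{8}.}

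For part (1), my plan is to use the fact that $A$ is unitary by specialising all but one variable to the identity. Setting $x_1=a$ for an arbitrary $a\in A$ and $x_2=\cdots=x_n=1$, every monomial $x_{\sigma(1)}\cdots x_{\sigma(n)}$ collapses to $a$ (the single non-unit factor appears once in each monomial), so $f(a,1,\dots,1)=\bigl(\sum_{\sigma\in S_n}\alpha_\sigma\bigr)\,a$. Since the scalar coefficient is nonzero, it is invertible in $K$, and by scaling $a$ appropriately we hit any prescribed element of $A$. Hence $\operatorname{Im} f = A$.

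For part (2), the key observation is that $\langle [x_1,x_2]\rangle^T$ is precisely the kernel of the canonical projection
\[
\pi\colon K\langle X\rangle \longrightarrow K[X]
\]
onto the commutative polynomial ring. Indeed, $\pi$ sends every commutator to zero, so the kernel contains $\langle [x_1,x_2]\rangle^T$; conversely the quotient $K\langle X\rangle/\langle [x_1,x_2]\rangle^T$ is commutative (all commutators vanish), and since it is a relatively free algebra on $X$ in the variety of commutative $K$-algebras, it coincides with $K[X]$, so the kernel equals $\langle [x_1,x_2]\rangle^T$. Applying $\pi$ to $f$ gives
\[
\pi(f)=\sum_{\sigma\in S_n}\alpha_\sigma\, x_{\sigma(1)}\cdots x_{\sigma(n)} = \Bigl(\sum_{\sigma\in S_n}\alpha_\sigma\Bigr)\, x_1x_2\cdots x_n \in K[X],
\]
which vanishes if and only if $\sum_\sigma\alpha_\sigma=0$. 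This yields both implications of (2) simultaneously.

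There is no real obstacle; the only point that needs a moment's care is the identification of $\langle [x_1,x_2]\rangle^T$ with $\ker\pi$, but this is standard and holds over any field. Notice also that (2) immediately implies a weaker form of (1): if $\sum_\sigma\alpha_\sigma\neq 0$ then $f\notin\langle [x_1,x_2]\rangle^T$, so $f$ is not an identity of $K$ itself, but the direct substitution argument above is needed to conclude that the image is all of $A$, not merely nonzero.
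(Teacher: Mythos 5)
Your proof is correct. The paper itself gives no argument for this lemma (it only remarks that it ``is easy to prove''), and your two steps --- substituting $1$ for all but one variable to get $f(a,1,\dots,1)=\bigl(\sum_\sigma\alpha_\sigma\bigr)a$ for part (1), and identifying $\langle[x_1,x_2]\rangle^T$ with the kernel of the abelianization $K\langle X\rangle\to K[X]$ for part (2) --- are exactly the standard argument the author is alluding to, valid over any field.
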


\begin{lemma}[Lemma 10 of \cite{FagundesdeMello}]\label{10}
	Let $K$ be a field with at least $n$ elements and let $d_{1},\dots,d_{n}\in K$ be distinct elements. Then for $D=diag(d_{1},\dots,d_{n})$ and $k\geq 0$, we have 
	\begin{eqnarray}\nonumber
	[J ^k,D]=J^k  \ \mbox{and}\  [UT_{n},D]=J.
	\end{eqnarray}
\end{lemma}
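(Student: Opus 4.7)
The plan is to reduce everything to a direct computation on matrix units, using that $D$ is diagonal with distinct entries. For any matrix unit $e_{ij}$ with $i\le j$ one has $De_{ij}=d_ie_{ij}$ and $e_{ij}D=d_je_{ij}$, so
\[
[e_{ij},D]=(d_j-d_i)e_{ij}.
\]
Since the $d_i$ are pairwise distinct, the scalar $d_j-d_i$ is zero exactly when $i=j$ and nonzero otherwise.

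From this, both identities drop out by linearity. For the first, I would recall that $J^k$ has the matrix-unit basis $\{e_{ij}:j-i\ge k\}$ (taking $k\ge 1$; for $k=0$ the second identity is what one really wants). The inclusion $[J^k,D]\subseteq J^k$ is immediate from the formula above, and the reverse inclusion follows because any $\sum_{j-i\ge k}b_{ij}e_{ij}\in J^k$ equals $[A,D]$ with
\[
A=\sum_{j-i\ge k}\frac{b_{ij}}{d_j-d_i}\,e_{ij}\in J^k,
\]
where the denominators are nonzero thanks to distinctness.

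For the second identity, the same formula gives $[UT_n,D]\subseteq\mathrm{span}\{(d_j-d_i)e_{ij}:i\le j\}=\mathrm{span}\{e_{ij}:i<j\}=J$, because the diagonal contributions vanish. The reverse inclusion $J\subseteq[UT_n,D]$ uses the same scaling trick as above, now applied to $i<j$.

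There is no real obstacle here; the only thing to be careful about is the hypothesis $|K|\ge n$, which is exactly what guarantees the existence of $n$ distinct scalars $d_1,\dots,d_n$ and hence the nonvanishing of each $d_j-d_i$ for $i\ne j$. Once that is in place, the lemma is a one-line linear-algebra calculation.
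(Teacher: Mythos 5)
Your computation is correct and is the standard argument: $\mathrm{ad}_D$ acts diagonally on the matrix-unit basis with eigenvalue $d_j-d_i$ on $e_{ij}$, which is nonzero precisely off the diagonal, so its image on $J^k$ (for $k\ge 1$) is all of $J^k$ and its image on $UT_n$ is exactly $J$. The paper states this lemma without proof, citing Lemma 10 of \cite{FagundesdeMello}, and your proof is the expected one; your parenthetical about $k=0$ is also a fair catch, since as literally stated $[J^0,D]=J^0$ would contradict $[UT_n,D]=J$, and the intended reading is indeed that the first identity concerns $k\ge 1$ (an artifact of translating the notation $UT_n^{(k)}=J^{k+1}$ of the cited paper).
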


\section{The case n=2}

In this section we give Fagundes' proof \cite[Proposition  4.7]{FagundesDissertation} of the case $n=2$ of the above conjecture. This is a simple application of Lemma \ref{8}.

\begin{proposition}
	Let $K$ be a field. The image of a multilinear polynomial on $UT_2$ is $UT_2$, $J$, or $\{0\}$.
\end{proposition}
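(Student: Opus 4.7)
The plan is to split into two cases according to Lemma~\ref{8}, which dichotomizes multilinear polynomials by whether the sum of coefficients vanishes.

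Write $f(x_1,\dots,x_n)=\sum_{\sigma\in S_n}\alpha_\sigma x_{\sigma(1)}\cdots x_{\sigma(n)}$. In the first case, suppose $\sum_\sigma\alpha_\sigma\neq 0$. Since $UT_2$ is unitary, Lemma~\ref{8}(1) immediately gives that the image of $f$ on $UT_2$ equals $UT_2$.

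In the second case, suppose $\sum_\sigma\alpha_\sigma=0$. By Lemma~\ref{8}(2), $f$ lies in the $T$-ideal generated by $[x_1,x_2]$, so every evaluation of $f$ on $UT_2$ is a sum of products that each contain a factor which is a commutator of elements of $UT_2$. Since $[UT_2,UT_2]\subseteq J$ and $J$ is a two-sided ideal, every evaluation lies in $J$. Thus the image of $f$ is contained in $J$.

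To finish, I would use multilinearity: for any scalar $\lambda\in K$, $f(\lambda x_1,x_2,\dots,x_n)=\lambda f(x_1,\dots,x_n)$, so the image of $f$ is closed under multiplication by scalars. Now in $UT_2$ we have $J=Ke_{12}$, a one-dimensional space. If the image is nonzero it must contain some $\mu e_{12}$ with $\mu\neq 0$, and then by scalar closure it contains all of $Ke_{12}=J$; otherwise it is $\{0\}$. This exhausts the three possibilities $UT_2$, $J$, $\{0\}$. There is no real obstacle here beyond noting that $J$ is one-dimensional in the $n=2$ case, which is exactly what makes this argument trivial compared to the $n=3$ situation treated later.
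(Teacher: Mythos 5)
Your proof is correct and follows essentially the same route as the paper's: the dichotomy via Lemma~\ref{8} on whether $\sum_\sigma\alpha_\sigma$ vanishes, containment of the image in $J$ in the second case, and the observation that $J$ is one-dimensional and the image is closed under scalar multiplication. You simply spell out a couple of steps (why the image lands in $J$, why scalar closure holds) that the paper leaves implicit.
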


\begin{proof}
Let $f(x_1,\dots,x_m)=\sum_{\sigma \in S_n} \alpha_{\sigma} x_{\sigma(1)} \cdots x_{\sigma(n)}$ be a multilinear polynomial. If $\sum_{\alpha_{\sigma}}\neq 0$, then the image of $f$ on $UT_2$ is  $UT_2$.
Suppose $\sum_{\alpha_{\sigma}}=0$. Then $Im(f)\subseteq J$. Now if $f$ is a polynomial identity, the image of $f$ is $\{0\}$. Otherwise, since $J$ is a one-dimensional subspace of $UT_2$, and the image of $f$ is closed under scalar multiplication, the image of $f$ is $J$.
\end{proof}

\section{The case n=3}

\begin{theorem}
	Let $K$ be an infinite field and $f\in P_n$. The image of $f$ in $UT_3$ is either $UT_3$, $J$, $J^2$ or $\{0\}$.
\end{theorem}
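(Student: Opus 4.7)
The plan is to mirror the $n=2$ proof via a case analysis, with a finer structural reduction. If $\sum_{\sigma\in S_n}\alpha_\sigma\ne 0$, Lemma \ref{8}(1) immediately yields $Im(f)=UT_3$. Otherwise $\sum_\sigma\alpha_\sigma=0$ and by Lemma \ref{8}(2) $f\in\langle[x_1,x_2]\rangle^T$, so $Im(f)\subseteq J$, and it remains to distinguish among $\{0\}$, $J^2$ and $J$.

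I would then reduce $f$ modulo $Id(UT_3)$ using Theorem \ref{Dr}. Since $UT_3$ satisfies $[x_1,x_2][x_3,x_4][x_5,x_6]=0$, and the purely commutative basis terms contribute $(\sum_\sigma\alpha_\sigma)\,x_1\cdots x_n=0$, one can write $f\equiv f_1+f_2\pmod{Id(UT_3)}$, where every monomial of $f_1$ contains exactly one commutator and every monomial of $f_2$ contains exactly two. A product of two commutators in $UT_3$ lies in $J\cdot J=J^2$, so $f_2(UT_3)\subseteq J^2$.

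This gives a clean dichotomy. If $f_1\in Id(UT_3)$, then $Im(f)=f_2(UT_3)\subseteq J^2$; since $J^2=Ke_{13}$ is one-dimensional and $Im(f)$ is closed under scalar multiplication (scaling a single input) and contains $0$, one concludes $Im(f)\in\{\{0\},J^2\}$. So assume instead $f_1\notin Id(UT_3)$; the claim is $Im(f)=J$. Note first that $f_2\in\langle[x_1,x_2][x_3,x_4]\rangle^T=Id(UT_2)$ by Theorem \ref{Dr} applied to $UT_2$, hence $f_1\notin Id(UT_3)$ is equivalent to $f\notin Id(UT_2)$. Applying the corner algebra homomorphisms $\phi_{12},\phi_{23}\colon UT_3\to UT_2$ (restriction to the top-left and bottom-right $2\times 2$ blocks), which are surjective and commute with evaluation of $f$, one obtains $\phi_{ij}(Im(f))=Im(f|_{UT_2})$; combined with the $n=2$ case and $Im(f|_{UT_2})\subseteq Ke_{12}$, this forces $Im(f|_{UT_2})=Ke_{12}$. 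Hence both the $(1,2)$- and the $(2,3)$-entries of $Im(f)$ individually range over all of $K$.

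The crux and main obstacle is upgrading this to $Im(f)=J$, which I would carry out in two steps. First, and hardest, show that the two corner entries may be prescribed simultaneously, i.e.\ $Im(f)\bmod J^2=J/J^2$. The strategy here is to combine the multilinear additivity $f(a+b,\ldots)=f(a,\ldots)+f(b,\ldots)$ with Lemma \ref{10} (which gives $[J,D]=J$ for any diagonal $D$ with pairwise distinct entries) to assemble explicit evaluations whose $(1,2)$- and $(2,3)$-components realize an arbitrary pair in $K^2$. Second, observe that $Im(f)$ is invariant under conjugation by the unipotent matrices $I+te_{12}$ and $I+te_{23}$; a direct computation shows that these send $ae_{12}+be_{23}+ce_{13}$ to $ae_{12}+be_{23}+(c-tb)e_{13}$ and $ae_{12}+be_{23}+(c+ta)e_{13}$ respectively, so once a value with nonzero $(1,2)$- or $(2,3)$-entry lies in $Im(f)$, the $(1,3)$-entry can be adjusted to any element of $K$. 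Combining the two steps yields $Im(f)=J$, completing the proof.
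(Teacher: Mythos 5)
Your reduction to the critical case is sound and partly parallels the paper: the cases $Im(f)=UT_3$ (item 1 of Lemma \ref{8}), $Im(f)\subseteq J$ (item 2 of Lemma \ref{8}), the decomposition $f\equiv f_1+f_2$ modulo the identities of $UT_3$ via Theorem \ref{Dr}, and the treatment of the two-commutator case ($J^2$ is one-dimensional and the image is closed under scalar multiplication) all match the paper's argument. The corner-homomorphism observation and the unipotent-conjugation step are also correct as far as they go. The problem is that the step you yourself flag as ``the crux and main obstacle'' --- showing that the $(1,2)$- and $(2,3)$-entries can be prescribed \emph{simultaneously}, i.e.\ that $Im(f)$ surjects onto $J/J^2$ --- is never actually proved; you only announce a strategy. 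And that strategy does not obviously close the gap: multilinear additivity $f(a+b,\ldots)=f(a,\ldots)+f(b,\ldots)$ only lets you add two evaluations that agree in all arguments but one, so you cannot simply add an evaluation realizing the pair $(a,0)$ of corner entries to an independent evaluation realizing $(0,b)$; and Lemma \ref{10} tells you $[J,D]=J$ but says nothing yet about which elements of the form $[z,D,\dots,D]$ actually occur as values of $f$. This missing step is precisely the hard content of the theorem, so the proposal as written has a genuine gap.

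For comparison, the paper closes this gap with a single substitution rather than an entry-by-entry assembly: among the one-commutator basis terms $x_{i_1}\cdots x_{i_k}[x_{i_{k+1}},\dots,x_{i_{k+t}}]$ occurring in $f_1$ with nonzero coefficient it chooses one with $t$ minimal, substitutes $I$ for the prefix variables and $D=diag(d_1,d_2,d_3)$ (with distinct $d_i$) for every remaining variable except one; this annihilates all terms of the second family and all other terms of the first family, leaving a nonzero scalar multiple of $[x,D,\dots,D]$, whose image is $[UT_3,D,\dots,D]=J$ by Lemma \ref{10}. To salvage your outline you would need an argument of this kind (or some other device) producing, for a \emph{single} tuple of inputs, a value of $f$ whose $(1,2)$- and $(2,3)$-entries are independently controllable; the two corner projections alone cannot supply this, because they forget the correlation between those two entries.
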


\begin{proof}
	Write $f(x_1,\dots,x_n)=\sum_{\sigma\in S_n} \alpha_{\sigma} x_{\sigma(1)} \cdots x_{\sigma(n)}$.
	If $f$ is a polynomial identity for $UT_3$, then the image of $f$ is $\{0\}$. So from now on we suppose $f$ is not an identity for $T_3$.

	
	If $\sum_{\sigma \in S_n}a_{\sigma}\neq 0$, then item 1 of the Lemma \ref{8} asserts that the image of $f$ is $UT_3$.
	
	So let us suppose that $\sum_{\sigma \in S_n}a_{\sigma}=0$. Item 2 of Lemma \ref{8} yields  $f\in \langle [x_1,x_2]\rangle ^{T}$. As a consequence, the image of $f$ is a subset of $J$.
	
	By Theorem \ref{Dr}, up to an identity of $UT_3$, $f$ can be written as a linear combination of the following two families of polynomials:
	
	\begin{enumerate}
		\item $x_{i_1}\cdots x_{i_k}[x_{i_{k+1}},\dots,x_{i_{k+t}}]$, with $k+t=n$,  $i_1<i_2<\dots <i_k$ and $i_{k+1}>i_{k+2}<i_{k+3}< \cdots < i_{k+t}$
		
		\item $x_{i_1}\cdots x_{i_k}[x_{i_{k+1}},\dots,x_{i_{k+t}}][x_{i_{k+t+1}},\dots,x_{i_{k+t+s}}]$, with $k+t+s=n$,\\ $i_1<i_2<\dots <i_k$ , 
		$i_{{k+1}}>i_{{k+2}}<i_{{k+3}}< \cdots < i_{{k+t}}$ and
		$i_{k+t+1}>i_{k+t+2}<i_{k+t+3}< \cdots < i_{k+t+s}$. 
	\end{enumerate}

	Let us first assume that $f\not \in \langle [x_1,x_2][x_3,x_4]\rangle ^T$. So there exists at least one polynomial of the first family of polynomials above with nonzero coefficient in the decomposition of $f$ as a linear combination of such polynomials.
	
	Let us consider one such polynomial with $t$ minimal. Now perform the substitution $x_{i_j}\mapsto I$, for $j=1,\dots,k$. Here $I$ stands for the $3\times 3$ identity matrix.
	Then, after this substitution, the polynomial $f$ is a nonzero linear combination of polynomials $[x_{j_1},\dots,x_{j_t}]$ with $j_1>j_2<j_3<\cdots<j_t$ and some polynomials in $\langle [x_1,x_2][x_3,x_4]\rangle ^{T}$.
	
	Now let $d_1,d_2,d_3$ be distinct elements in $K$ and $D=diag(d_1,d_2,d_3)$. By Lemma \ref{10} $[UT_n,D]=J$ and $[J,D]=J$. As a consequence,  $[UT_n,D,\dots,D]=J$. After substituting all variables but one by $D$, the polynomials in $\langle [x_1,x_2][x_3,x_4]\rangle ^{T}$ vanish. All the other polynomials but one also vanish, and we obtain that the image of $f$ is $J$. 
	
	To finish the proof, we assume $f\in \langle[x_1,x_2][x_3,x_4] \rangle^T$. Of course, the image of $f$ is a subset of $J^2$, which is 1-dimensional. Since $f$ is not a polynomial identity for $UT_3$, its image is $J^2$ and the theorem is proved.
	
\end{proof}


\end{document}